\def\ds{\displaystyle}
\def\bb{\overline{b}}
\def\eee{\qquad\mbox{and}\qquad}
\newcommand{\leg}[2]{\left(\frac{#1}{#2}\right)}
\newcommand{\qbin}[2]{\genfrac{[}{]}{0pt}{}{#1}{#2}_q}
\newcommand{\qq}[1]{[#1]_q}
\newtheorem{thm}{Theorem}[section]
\newtheorem{lem}[thm]{Lemma}
\begin{document}

\title[Some $q$-analogs of congruences for central binomial sums]{\bf Some $q$-analogs of congruences \\ for central binomial sums}

\author{Roberto Tauraso}
\email{tauraso@mat.uniroma2.it}
\address{Dipartimento di Matematica, % \\
Universit\`a di Roma ``Tor Vergata'', % \\
via della Ricerca Scientifica, %\\
00133 Roma, Italy}

%\makeatletter{\renewcommand*{\@makefnmark}{}
%\footnotetext{{2000 {\it Mathematics Subject Classification}: 11B65, 11A07 (Primary) 05A10, 05A19, 05A30 (Secondary)}}\makeatother}
\subjclass[2000]{11B65, 11A07 (Primary) 05A10, 05A19, 05A30 (Secondary)}

\keywords{$q$-analogs, Gaussian $q$-binomial coefficients, central binomial coefficients, congruences}

\date{}%\today}

\maketitle
%\thanks{}
\begin{abstract}
\noindent We establish $q$-analogs for four congruences involving central binomial coefficients.
The $q$-identities necessary for this purpose are shown via the $q$-WZ method.
\end{abstract}

%%%%%%%%%%%%%%%%%%%%%%%%%%%%%%%%%%%%%%%%%%%%%%%%%%%%%%%%%%%%%%%%%%%%%%%%%%
\section{Introduction}
%%%%%%%%%%%%%%%%%%%%%%%%%%%%%%%%%%%%%%%%%%%%%%%%%%%%%%%%%%%%%%%%%%%%%%%%%%

Recently, a number of papers have appeared concerning congruences for central binomial sums (see through the references). 
Here we would like to draw the attention to one aspect of the matter
which has been partly neglected so far: $q$-analogs.
In \cite{GZ,Ta1}, the authors identified a first group of such 
congruences which have a $q$-counterpart. 
Among them we mention: for any prime $p>2$
$$\begin{array}{ll}
\ds \sum_{k=0}^{p-1}\binom{2k}{k}\equiv\leg{p}{3} \pmod{p^2}, 
&\ds\sum_{k=0}^{p-1}q^{k}\qbin{2k}{k}\equiv
\leg{p}{3}q^{\lfloor {p^2\over 3}\rfloor}\pmod{\qq{p}},\\
\ds\sum_{k=0}^{p-1}(-1)^k\binom{2k}{k}\equiv \leg{p}{5} \pmod{p},
&\ds\sum_{k=0}^{p-1}(-1)^kq^{-{k+1\choose 2}}\qbin{2k}{k}\equiv
\leg{p}{5}q^{-\lfloor {p^4\over 5}\rfloor} \pmod{\qq{p}},\\
\ds\sum_{k=0}^{p-1}{1\over 2^k}\binom{2k}{k}\equiv (-1)^{{p-1\over 2}} \pmod{p^2},
&\ds\sum_{k=1}^{p-1} {q^{k}\over (-q;q)_{k}}\qbin{2k}{k}\equiv (-1)^{{p-1\over 2}}q^{\lfloor {p^2\over 4}\rfloor}\pmod{\qq{p}},
\end{array}$$
where $\leg{\cdot}{\cdot}$ denotes the Legendre symbol.
It has been conjectured 
in \cite{GZ} that the first $q$-congruence holds modulo $\qq{p}^2$, and 
we claim that the same can be said for the third one. 
However, in this short note, we are not going to refine these $q$-congruences.
Instead, we will present a few more examples of this phenomena.
More precisely we show that the congruences
\begin{align}
&\label{pc1}\sum_{k=1}^{p-1} {(1/2)^k \over k}\binom{2k}{k}\equiv-{3\over 2}\sum_{k=1}^{p-1} {(-1/2)^k \over k}\binom{2k}{k}^{-1}\equiv Q_p(2) \pmod{p},\\
&\label{pc2}\sum_{k=1}^{p-1} {1\over  k}\binom{2k}{k}\equiv 0 \pmod{p^2},\\
&\label{pc3}{5\over 2}\sum_{k=1}^{p-1} {(-1)^k\over  k^2}\binom{2k}{k}\equiv -\sum_{k=1}^{p-1} {1\over  k^2} \pmod{p^3},
\end{align}
where $p>5$ is a prime and $Q_p(2)=(2^{p-1}-1)/p$ is the usual Fermat's quotient, 
have as $q$-analogs respectively
\begin{align}
&\label{qc1}\sum_{k=1}^{p-1} {q^k\over (-q;q)_{k}\qq{k}}\qbin{2k}{k}\equiv
-{1\over 2}\sum_{k=1}^{p-1} {(-1)^k(1+q^k+q^{2k})\over  (-q;q)_{k}\qq{k}}\qbin{2k}{k}^{-1}\equiv Q_p(2;q)\pmod{\qq{p}},\\
&\label{qc2}\sum_{k=1}^{p-1} {(1+q^k+q^{2k})\,q^{-{k\choose 2}}\over (1+q^k)^2\qq{k}}\qbin{2k}{k}\equiv {\qq{p}(p^2-1)(1-q)^2\over 24}\pmod{\qq{p}^2},\\
&\label{qc3}\sum_{k=1}^{p-1} {(-1)^k(1+3q^k+q^{2k})\,q^{-{k\choose 2}}\over (1+q^k)\qq{k}^2}\qbin{2k}{k}\equiv -\sum_{k=1}^{p-1} {q^k\over \qq{k}^2}
-{\qq{p}^2(p^4-1)(1-q)^4\over 240}\pmod{\qq{p}^3}.
\end{align}
where $Q_p(2;q)=((-q;q)_{p-1}-1)/\qq{p}$.
Proofs of \eqref{pc1}, \eqref{pc2}, \eqref{pc3} can be found in \cite[Theorem 3.1]{Ta2} (\eqref{pc2} appeared first in \cite{ST2}).

We are optimistically hopeful that there are plenty of interesting $q$-analogs
to discover. For example, recently in \cite{PP}, the authors proved that for $0<q<1$
$$\sum_{k=1}^{\infty} {(1+2q^k)\,q^{k^2}\over \qq{k}^2}\qbin{2k}{k}^{-1}
=\sum_{k=1}^{\infty} {q^k\over \qq{k}^2}.$$
By letting $q\to 1$, it gives a well known series identity which then happened to have
a congruence version: in \cite{Ta2} we showed that for any prime $p>3$
$$\sum_{k=1}^{p-1} {1 \over k^2}\binom{2k}{k}^{-1}\equiv -{1\over 6}\sum_{k=1}^{p-1} {1\over  k^2}\pmod{p^3}.$$
Is there a $q$-analog for the above congruence?
%We encourage the interested reader to further explore this subject and to share the results with us.

%%%%%%%%%%%%%%%%%%%%%%%%%%%%%%%%%%%%%%%%%%%%%%%%%%%%%%%%%%%%%%%%%%%%%%%%%%
\section{Notations a preliminary results}
%%%%%%%%%%%%%%%%%%%%%%%%%%%%%%%%%%%%%%%%%%%%%%%%%%%%%%%%%%%%%%%%%%%%%%%%%%

The first two results of this section yield a family of $q$-analogs of the classical 
congruence for the harmonic sums: for any prime $p>d+2$ where $d$ is a positive integer,
$$H_{p-1}(d):=\sum_{k=1}^{p-1}{1\over k^d}\equiv 
\left\{
\begin{array}{ll}
0 \pmod{p^2} &\mbox{if $d$ is odd,}\\
0 \pmod{p}   &\mbox{if $d$ is even.}
\end{array}
\right.$$
This family depends on two integer parameters $a,b$ and it concerns the sum 
$$\sum_{k=1}^{p-1}{q^{bk}\over \qq{ak}^d}$$
where 
$$\qq{n}={1-q^n \over 1-q}=1+q+\dots+q^{n-1}.$$
Several special cases have already been discussed by numerous authors (see \cite{An2,Di,Pa,SP}).
In particular, K. Dilcher found in \cite{Di} a determinant expression in the case when $a=1$ and $b\in\{0,1\}$.
We point out that, in this paper, two rational functions in $q$ are congruent modulo $\qq{p}^r$ for $r\geq 1$
if the numerator of their difference is congruent to $0$ modulo $\qq{p}^r$ in the polynomial ring $\mathbb{Z}[q]$
and the denominator is relatively prime to $\qq{p}$.

\begin{thm} For any prime $p>2$, if $a,b,d$ are integers such that $a,d>0$, $b\geq 0$ and $\gcd(a,p)=1$ then 
\begin{equation}\label{qH}
\sum_{k=1}^{p-1}{q^{bk}\over \qq{ak}^d}\equiv
{(1-q)^d\over p^d}\left((-1)^d p
\sum_{s=0}^{d-1}c_s{r_0+sp\choose d}-\sum_{s=0}^d (-1)^s{d \choose s}{sp \choose 2d}\right)
\pmod{\qq{p}}
\end{equation}
where $r_0\equiv -b/a \pmod{p}$ such that $r_0\in \{0,1,\dots,p-1\}$ and
$$c_s=\sum_{k=0}^s (-1)^{s-k}{r_0+kp+d-1\choose d-1}{d\choose s-k}.$$
\end{thm}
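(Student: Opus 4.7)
My first step is to reduce the sum to a canonical form. Since $\gcd(a,p)=1$, the map $k\mapsto r_k:=(ak)\bmod p$ is a bijection of $\{1,\dots,p-1\}$. Modulo $\qq{p}$ we have $q^p\equiv 1$, hence $\qq{ak}\equiv\qq{r_k}$; and by definition of $r_0$ we get $bk+r_0 r_k\equiv 0\pmod p$, so $q^{bk}\equiv q^{-r_0 r_k}\pmod{\qq{p}}$. Re-indexing by $r=r_k$ and using $\qq{r}^d=(1-q^r)^d/(1-q)^d$, the problem reduces to evaluating
$$T:=\sum_{r=1}^{p-1}\frac{q^{-r_0 r}}{(1-q^r)^d}\pmod{\qq{p}}$$
and showing it equals the bracketed quantity of the claimed formula divided by $p^d$.

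I would then compute $T$ by the residue theorem. Identifying the image of $q$ in $\mathbb{Z}[q]/\qq{p}(q)$ with a primitive $p$-th root of unity $\zeta$, we can write $T=\mathrm{Tr}_{\mathbb{Q}(\zeta)/\mathbb{Q}}\bigl(\zeta^{-r_0}/(1-\zeta)^d\bigr)$. Set $g(z):=z^{-r_0}/(1-z)^d$ and $H(z):=g(z)\,\qq{p}'(z)/\qq{p}(z)$, viewing $\qq{p}(z)=\sum_{k=0}^{p-1}z^k=\prod_{\omega\ne 1,\,\omega^p=1}(z-\omega)$ as a polynomial in an auxiliary variable $z$. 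The logarithmic derivative $\qq{p}'/\qq{p}$ has residue $1$ at each primitive $p$-th root of unity $\omega$, so $\mathrm{Res}_{z=\omega}H=g(\omega)$. The only remaining poles of $H$ lie at $z=0$ and $z=1$, and the decay $H(z)\sim(-1)^d(p-1)z^{-r_0-d-1}$ at infinity forces $\mathrm{Res}_\infty H=0$. Thus $T=-\mathrm{Res}_{z=0}H-\mathrm{Res}_{z=1}H$. At $z=0$ I use the partial fraction $\qq{p}'(z)/\qq{p}(z)=1/(1-z)-pz^{p-1}/(1-z^p)$ to extract $[z^{r_0-1}]$ of $(1-z)^{-d}\,\qq{p}'(z)/\qq{p}(z)$. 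At $z=1$ I substitute $u=z-1$ and use $\qq{p}(1+u)=\sum_{j=0}^{p-1}\binom{p}{j+1}u^j$ together with $\qq{p}'(1+u)/\qq{p}(1+u)=B'(u)/B(u)$ for $B(u):=\qq{p}(1+u)$, so that the residue becomes $(-1)^d\,[u^{d-1}](1+u)^{-r_0}B'(u)/B(u)$.

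The main obstacle is matching the expression $-\mathrm{Res}_{z=0}H-\mathrm{Res}_{z=1}H$ with the specific decomposition in the stated formula, as the two residues do not individually correspond to the two sums $p\sum_s c_s\binom{r_0+sp}{d}$ and $\sum_s(-1)^s\binom{d}{s}\binom{sp}{2d}$: both $r_0$-dependent and $r_0$-independent pieces are distributed across the two residues. The final identification requires recognizing the convolved coefficients $c_s=[x^s]F(x)(1-x)^d$, where $F(x)=\sum_k\binom{r_0+kp+d-1}{d-1}x^k$, as the natural reorganization of the $r_0$-dependent terms in the $z=0$ contribution, and simultaneously identifying the $r_0$-free piece produced by the $z=1$ contribution as the $d$-fold forward difference $(-1)^d\Delta^d[\binom{xp}{2d}]_{x=0}$. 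This delicate combinatorial bookkeeping, resting on the identity $[u^{d-1}](1+u)^{sp}=\binom{sp}{d-1}$ applied after a Leibniz rearrangement of $(1+u)^{-r_0}\cdot B'(u)/B(u)$, is the step I expect to demand the most care.
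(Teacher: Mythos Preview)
Your residue-theoretic approach is sound and will produce the correct value of $T$; the reduction to $T=\sum_{r=1}^{p-1}\zeta^{-r_0 r}/(1-\zeta^r)^d$, the vanishing of $\mathrm{Res}_\infty H$, and the decomposition $T=-\mathrm{Res}_{z=0}H-\mathrm{Res}_{z=1}H$ are all correct (and one checks readily for $d=1,2$ that this matches \eqref{qH1}, \eqref{qH2}). But the paper follows a different and shorter path. Rather than summing $g(\omega)$ over roots of unity by residues, it introduces an auxiliary variable $z$ and evaluates the full generating function
\[
\sum_{k=1}^{p-1}\frac{q^{bk}}{(1-q^{ak}z)^d}
=\frac{p\sum_{s=0}^{d-1}c_s\,z^{r_0+sp}}{(1-z^p)^d}-\frac{1}{(1-z)^d}
\]
directly from the orthogonality relation $\sum_{k=1}^{p-1}q^{k(aj+b)}=p\,[\,p\mid aj+b\,]-1$; here the $c_s$ appear naturally as the numerator coefficients when $\sum_{s\ge0}\binom{r_0+sp+d-1}{d-1}z^{sp}$ is cleared over $(1-z^p)^d$. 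Then setting $z=1+w$ and sending $w\to 0$ produces the two sums in the stated formula at once: the $c_s$-sum from the first fraction, and the alternating sum $\sum_s(-1)^s\binom{d}{s}\binom{sp}{2d}$ from $[w^d]\bigl((1-(1+w)^p)/(-w)\bigr)^d$.

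The upshot is that in the paper's argument the specific combinatorial shape of the answer falls out automatically, whereas your residues yield $T=-\binom{r_0+d-1}{d}-(-1)^d[u^{d-1}]\bigl((1+u)^{-r_0}B'(u)/B(u)\bigr)$ and you still have to massage this into the stated form --- precisely the ``delicate bookkeeping'' you flag. Your route is a legitimate alternative, arguably more systematic if one wanted variants or higher moduli, but for this exact statement the generating-function-plus-limit argument is both more elementary and more direct.
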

\begin{proof} Let $q$ be a $p$-root of unity such that $q\neq 1$. Since
$$\sum_{k=1}^{p-1}q^{k(aj+b)}=-1+
\left\{
\begin{array}{cl}
p &\mbox{if $p\mid (aj+b)$},\\
0 &\mbox{otherwise},
\end{array}
\right.
$$
it follows that
\begin{align*}
\sum_{k=1}^{p-1}{q^{bk}\over (1-q^{ak}z)^d}
&=\sum_{k=1}^{p-1}q^{bk}\sum_{j\geq 0} {j+d-1\choose d-1}q^{akj}z^j\\
&=\sum_{j\geq 0} {j+d-1\choose d-1}z^j\sum_{k=1}^{p-1}q^{k(aj+b)}\\
&=p\sum_{s\geq 0} {r_0+sp+d-1\choose d-1}z^{r_0+sp}-{1\over (1-z)^d}\\
&={p\sum_{s=0}^{d-1} c_s z^{r_0+sp}\over (1-z^p)^d}-{1\over (1-z)^d}.
\end{align*}
Let $z=1+w$, then 
\begin{align*}
\sum_{k=1}^{p-1}{q^{bk}\over (1-q^{ak})^d}
&=\lim_{w\to 0}
{p\sum_{s=0}^{d-1} c_s (1+w)^{r_0+sp} -\left({1-(1+w)^p\over -w}\right)^d\over (1-(1+w)^p)^d}\\
&=\lim_{w\to 0}
{p\sum_{s=0}^{d-1} c_s {r_0+sp\choose d}w^d+o(w^d)
 -(-1)^d\sum_{s=0}^{d}(-1)^s{d \choose s}{sp \choose 2d}w^d+o(w^d)
\over (-pw+o(w))^d}\\
&={1\over p^d}\left((-1)^d p
\sum_{s=0}^{d-1}c_s{r_0+sp\choose d}-\sum_{s=0}^d (-1)^s{d \choose s}{sp \choose 2d}\right).
\end{align*}
\end{proof} 

\noindent  The following special cases are worth mentioning. By letting $d=1,2,3$ in \eqref{qH}, we obtain these $q$-congruences 
which hold modulo $\qq{p}$:
\begin{align}
\label{qH1} &\sum_{k=1}^{p-1}{q^{bk}\over \qq{ak}}\equiv {(1-q)}\left({p-1\over 2}-r_0\right), \\
\label{qH2} &\sum_{k=1}^{p-1}{q^{bk}\over \qq{ak}^2}\equiv {(1-q)^2}\left(-{(p-1)(p-5)\over 12}+{r_0(p-2-r_0)\over 2}\right), \\
\label{qH3} &\sum_{k=1}^{p-1}{q^{bk}\over \qq{ak}^3}\equiv {(1-q)^3}\left(-{(p-1)(p-3)\over 8}-{r_0(p^2-9p+12-3r_0(p-3)+2r_0^2)\over 12}\right). 
\end{align}

\begin{thm} Let $b,\overline{b},a,d$ be non-negative integers such that $ad=b+\bb>0$. 

\noindent Then for any prime $p>2$ such that $\gcd(a,p)=1$,  
\begin{equation}\label{qHpos}
\sum_{k=1}^{p-1}{(-1)^{d-1}q^{bk}+q^{\bb k}\over \qq{ak}^d}\equiv 
b(1-q)\qq{p}\sum_{k=1}^{p-1}{q^{\bb k}\over \qq{ak}^d}-ad\qq{p}\sum_{k=1}^{p-1}{q^{\bb k}\over \qq{ak}^{d+1}}
\pmod{\qq{p}^2}
\end{equation}
and 
\begin{equation}\label{qHneg}
\sum_{k=1}^{p-1}{(-1)^k((-1)^{d}q^{bk}+q^{\bb k})\over \qq{ak}^d}\equiv 
b(1-q)\qq{p}\sum_{k=1}^{p-1}{(-1)^kq^{\bb k}\over \qq{ak}^d}-ad\qq{p}\sum_{k=1}^{p-1}{(-1)^kq^{\bb k}\over \qq{ak}^{d+1}}\pmod{\qq{p}^2}.
\end{equation}
Moreover, let $b_1,\bb_1,b_2,\bb_2$ be non-negative integers.

\noindent If $d_1=b_1+\bb_1>0$ and $d_2=b_2+\bb_2>0$ then
\begin{equation}\label{qHH}
\sum_{1\leq j<k\leq p-1}{q^{b_1 j+b_2 k}+(-1)^{d_1+d_2}q^{\bb_1 j+\bb_2 k}\over \qq{j}^{d_1}\qq{k}^{d_2}}\equiv
\sum_{k=1}^{p-1}{q^{b_1 j}\over \qq{j}^{d_1}}\cdot \sum_{k=1}^{p-1}{q^{b_2 k}\over \qq{k}^{d_2}}-\sum_{k=1}^{p-1}{q^{(b_1+b_2) k}\over \qq{k}^{d_1+d_2}}
\pmod{\qq{p}}.
\end{equation}
\end{thm}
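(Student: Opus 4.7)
All three congruences rest on two involutions---$k\mapsto p-k$ for the single sums and $(j,k)\mapsto(p-k,p-j)$ for the double sum---combined with the algebraic identity
$$\qq{a(p-k)} \;=\; q^{-ak}\bigl(\qq{ap}-\qq{ak}\bigr)$$
and the consequences of $q^p = 1-(1-q)\qq{p}$, namely $q^{bp} \equiv 1 - b(1-q)\qq{p}$ and $\qq{ap}\equiv a\qq{p}$ modulo $\qq{p}^2$. A binomial expansion (using the invertibility of $\qq{ak}$ modulo $\qq{p}$, which is granted by $\gcd(a,p)=1$ and $1\leq k\leq p-1$) then yields the key expansion
$$\frac{1}{\qq{a(p-k)}^d} \;\equiv\; \frac{(-1)^d q^{adk}}{\qq{ak}^d} \;+\; \frac{(-1)^d d\, q^{adk}\qq{ap}}{\qq{ak}^{d+1}} \pmod{\qq{p}^2}.$$

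To prove \eqref{qHpos} I would apply $k\mapsto p-k$ to $\sum_{k=1}^{p-1} q^{bk}/\qq{ak}^d$, extract the factor $q^{bp}$, and insert the key expansion. The identity $ad-b=\bb$ reassembles $q^{bp}q^{-bk}\cdot q^{adk}=q^{bp}q^{\bb k}$, producing $(-1)^d q^{bp}S_d + (-1)^d d\, q^{bp}\qq{ap}S_{d+1}$, where $S_r:=\sum_{k=1}^{p-1}q^{\bb k}/\qq{ak}^r$. Reducing $q^{bp}$ and $\qq{ap}$ modulo $\qq{p}^2$, multiplying by $(-1)^{d-1}$, and adding $S_d$ to both sides gives \eqref{qHpos}. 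The proof of \eqref{qHneg} is entirely parallel; the only change is that $(-1)^{p-k}=-(-1)^k$ (since $p$ is odd) introduces an extra sign, turning the $(-1)^{d-1}$ coefficient of $q^{bk}$ into $(-1)^d$.

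For \eqref{qHH} I would first expand the product on the right-hand side and split the resulting double sum according to $j<k$, $j=k$, or $j>k$:
$$\left(\sum_{j=1}^{p-1}\frac{q^{b_1 j}}{\qq{j}^{d_1}}\right)\!\!\left(\sum_{k=1}^{p-1}\frac{q^{b_2 k}}{\qq{k}^{d_2}}\right) = \sum_{j<k}\frac{q^{b_1 j+b_2 k}}{\qq{j}^{d_1}\qq{k}^{d_2}} + \sum_{k=1}^{p-1}\frac{q^{(b_1+b_2)k}}{\qq{k}^{d_1+d_2}} + \sum_{j>k}\frac{q^{b_1 j+b_2 k}}{\qq{j}^{d_1}\qq{k}^{d_2}}.$$
Then I would apply the involution $(j,k)\mapsto(p-k,p-j)$ of the index set $\{1\leq j<k\leq p-1\}$ to the last sum. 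Working this time only modulo $\qq{p}$, the first-order reductions $\qq{p-j}\equiv -q^{-j}\qq{j}$, $\qq{p-k}\equiv -q^{-k}\qq{k}$ and $q^{(b_1+b_2)p}\equiv 1$ convert that piece into $(-1)^{d_1+d_2}\sum_{j<k}q^{\bb_1 j+\bb_2 k}/(\qq{j}^{d_1}\qq{k}^{d_2})$, and a rearrangement produces \eqref{qHH}.

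The calculations are almost entirely mechanical; the two points that require some care are the sign tracking in \eqref{qHneg} (where the factor $(-1)^{p-k}$ changes the overall sign) and the verification that every $q$-denominator appearing in the manipulations remains coprime to $\qq{p}$. The conceptual payoff is that the relation $ad=b+\bb$ is exactly what is needed for the exponent $adk-bk$ to collapse to $\bb k$, which is what lets the reindexing bridge the $q^{bk}$ and $q^{\bb k}$ sums.
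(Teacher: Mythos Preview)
Your proof is correct and follows the paper's argument essentially line for line: the paper also establishes \eqref{qHpos} and \eqref{qHneg} by reindexing $k\mapsto p-k$, writing $\qq{a(p-k)}^{-d}$ via $\qq{ap}-\qq{ak}$ and expanding to first order in $\qq{p}$, and it proves \eqref{qHH} by splitting the product on the right into the pieces $j<k$, $j=k$, $j>k$ and reflecting the $j>k$ piece. The only cosmetic difference is that the paper reflects the $k<j$ sum via $(j,k)\mapsto(p-j,p-k)$ rather than your $(j,k)\mapsto(p-k,p-j)$; the two differ by a swap of dummy indices and give the same result.
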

\begin{proof}As regards \eqref{qHpos} and \eqref{qHneg}, 
it suffices to note that
\begin{align}
{(-1)^{d}q^{b(p-k)}\over \qq{a(p-k)}^d}&={(-1)^dq^{b(p-k)+adk}\over (\qq{ap}-\qq{ak})^d}={q^{b p+\bb k}\over \qq{ak}^d(1-\qq{ap}/\qq{ak})^d}\nonumber \\
&\equiv {q^{\bb k}(1-b(1-q)\qq{p})\over \qq{ak}^d}\left(1+d{\qq{p}\over \qq{ak}}\right)\nonumber \\
&\label{qcp}\equiv {q^{\bb k}\over \qq{ak}^d}-{b(1-q)\qq{p} q^{\bb k}\over \qq{ak}^d}+{ad\qq{p} q^{\bb k}\over \qq{ak}^{d+1}}
\pmod{\qq{p}^2}.
\end{align}
Moreover
$$\sum_{k=1}^{p-1}{q^{b_1 j}\over \qq{j}^{d_1}}\cdot \sum_{k=1}^{p-1}{q^{b_2 k}\over \qq{k}^{d_2}}-\sum_{k=1}^{p-1}{q^{(b_1+b_2) k}\over \qq{k}^{d_1+d_2}}=
\sum_{1\leq j<k\leq p-1}{q^{b_1 j+b_2 k}\over \qq{j}^{d_1}\qq{k}^{d_2}}+\sum_{1\leq k<j\leq p-1}{q^{b_1 j+b_2 k}\over \qq{j}^{d_1}\qq{k}^{d_2}}$$
and by \eqref{qcp} we get
\begin{align*}
\sum_{1\leq k<j\leq p-1}{q^{b_1 j+b_2 k}\over \qq{j}^{d_1}\qq{k}^{d_2}}&=\sum_{1\leq j<k\leq p-1}{q^{b_1(p-j)+b_2(p- k)}\over \qq{p-j}^{d_1}\qq{p-k}^{d_2}}
\equiv \sum_{1\leq j<k\leq p-1}{(-1)^{d_1+d_2}q^{\bb_1 j+\bb_2 k}\over \qq{j}^{d_1}\qq{k}^{d_2}}  \pmod{\qq{p}}.
\end{align*}
Hence the proof of \eqref{qHH} is complete.
\end{proof} 

By letting $a=1$, $d=1$ and $b=0$ in \eqref{qHpos}, and by using
\eqref{qH}, we easily find \cite[Theorem 1]{SP}:
for any prime $p>3$:
$$\sum_{k=1}^{p-1}{1\over \qq{k}}=
{1\over 2}\sum_{k=1}^{p-1}{1-q^k\over \qq{k}}
{1\over 2}\sum_{k=1}^{p-1}{1+q^k\over \qq{k}}
\equiv {(1-q)(p-1)\over 2}+{(p^2-1)(1-q)^2\qq{p}\over 24}\pmod{\qq{p}^2}.$$
In a similar way, for $a=1$, $d=3$ and $b=1$, \eqref{qHpos} yields
\begin{align}\label{qHpos3}
\sum_{k=1}^{p-1} {q^k+q^{2k}\over \qq{k}^3}
&\equiv  (1-q)\qq{p}\sum_{k=1}^{p-1} {q^{2k}\over \qq{k}^3}-3\qq{p}\sum_{k=1}^{p-1} {q^{2k}\over \qq{k}^4}
\equiv -{\qq{p}(1-q)^4(p^4-1)\over 240}\pmod{\qq{p}^2}.
\end{align}

In order to show the $q$-congruences stated in the introduction, we need {\sl suitable} $q$-identities.
Such identities are not easy to find, but ones they are guessed correctly hopefully they can be proved via the $q$-WZ method
(see for example \cite{Mo,Ze}).

For $n\geq k\geq 0$, a pair $(F(n,k),G(n,k))$ is called {\sl $q$-WZ pair} if 
$$F(n+1,k)/F(n,k), \quad F(n,k+1)/F(n,k),\quad  G(n+1,k)/G(n,k),\quad  G(n,k+1)/G(n,k)$$
are all rational functions of $q^n$ and $q^k$, and
$$F(n+1,k)-F(n,k)=G(n,k+1)-G(n,k).$$
Let 
$$S(n)=\sum_{k=0}^{N-1} F(n,k)$$
then 
\begin{align*}
S(n+1)-S(n)&=F(n+1,n)+\sum_{k=0}^{n-1} (F(n+1,k)-F(n,k))\\
&=F(n+1,n)+\sum_{k=0}^{n-1} (G(n,k+1)-G(n,k))\\
&=F(n+1,n)+G(n,n)-G(n,0)
\end{align*}
and, by summing over $n$ from $0$ to $N-1$, we get the identity  
\begin{equation}\label{qMWZ}
\sum_{k=0}^{N-1} F(N,k)=\sum_{n=0}^{N-1}\left(F(n+1,n)+G(n,n)\right)-\sum_{n=0}^{N-1} G(n,0)
\end{equation}
which can be considered as the finite form of \cite[Theorem 7.3]{Mo}.

The $q$-identities we are interested in involves  {\sl Gaussian $q$-binomial coefficients}
$$\qbin{n}{k}=\left\{
\begin{array}{ll}
(q;q)_n(q;q)_k^{-1}(q;q)_{n-k}^{-1} &\mbox{if $0\leq k\leq n$},\\[3pt]
0 &\mbox{otherwise},
\end{array}\right.$$
where $(a;q)_n=\prod_{j=0}^{n-1}(1-aq^j)$ (note that $\qbin{n}{k}$ is a polynomial in $q$).
The next lemma allows us to reduce a special class of $q$-binomial coefficients 
modulo a power of $\qq{p}$.

\begin{lem} Let $p$ be a prime and let $a$ be a positive integer. For $k=1,\dots,p-1$ we have
\begin{align}
\label{bc1}&\qbin{ap-1}{k}\equiv(-1)^{k}q^{-{k+1\choose 2}}\left(1-a\qq{p}\sum_{j=1}^{k}{1\over \qq{j}}\right)\pmod{\qq{p}^2},\\
\label{bc2}&\qbin{p-1+k}{k}\equiv{\qq{p}\over \qq{k}}\left(1+\qq{p}\sum_{j=1}^{k-1}{q^j\over \qq{j}}\right)\pmod{\qq{p}^3},\\
\label{bc3}&
\qbin{p-1+k}{k}\qbin{p-1}{k}^{-1}\equiv {(-1)^{k}q^{{k+1\choose 2}}\qq{p}\over\qq{k}}\left(1+{\qq{p}\over \qq{k}}+\qq{p}\sum_{j=1}^{k-1}{1+q^j\over \qq{j}}\right)\pmod{\qq{p}^3}.
\end{align}
\end{lem}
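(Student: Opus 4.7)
The common strategy is to expand $\qbin{n}{k}=\prod_{j=1}^{k}(1-q^{n-j+1})/(1-q^j)$ factor by factor, using the exact identity $1-q^p=(1-q)\qq{p}$ together with the addition formula $\qq{m+n}=\qq{m}+q^m\qq{n}$. The only substantive work is tracking orders of $\qq{p}$; no $q$-WZ machinery is required here.

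For \eqref{bc1}, take $n=ap-1$. Since $q^{ap}=(1-(1-q)\qq{p})^a\equiv 1-a(1-q)\qq{p}\pmod{\qq{p}^2}$, one computes
$$1-q^{ap-j}=1-q^{-j}+a(1-q)q^{-j}\qq{p}+O(\qq{p}^2)=-q^{-j}(1-q)(\qq{j}-a\qq{p})+O(\qq{p}^2),$$
so that $(1-q^{ap-j})/(1-q^j)\equiv -q^{-j}(1-a\qq{p}/\qq{j})\pmod{\qq{p}^2}$. Multiplying over $j=1,\ldots,k$ produces the sign $(-1)^k$, the power $q^{-\binom{k+1}{2}}$, and, after linearising the product of the bracketed terms (which is legitimate mod $\qq{p}^2$), the factor $1-a\qq{p}\sum_{j=1}^{k}1/\qq{j}$.

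For \eqref{bc2}, the addition formula gives $\qq{p+j-1}=\qq{j-1}+q^{j-1}\qq{p}$ for $j\ge 2$, while the $j=1$ factor is exactly $\qq{p}$. Telescoping $\prod_{j=2}^{k}\qq{j-1}/\qq{j}=1/\qq{k}$, one obtains the \emph{exact} identity
$$\qbin{p+k-1}{k}=\frac{\qq{p}}{\qq{k}}\prod_{j=2}^{k}\left(1+\frac{q^{j-1}\qq{p}}{\qq{j-1}}\right).$$
The prefactor $\qq{p}$ buys one order of precision, so it suffices to expand the remaining product modulo $\qq{p}^2$, i.e.\ linearly; reindexing $j\mapsto j+1$ in the resulting sum yields \eqref{bc2}.

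For \eqref{bc3}, combine \eqref{bc1} at $a=1$ with \eqref{bc2}. The same ``$\qq{p}$ of slack'' argument means one only needs $\qbin{p-1}{k}^{-1}$ modulo $\qq{p}^2$, which a one-term geometric inversion of \eqref{bc1} delivers as $(-1)^kq^{\binom{k+1}{2}}(1+\qq{p}\sum_{j=1}^{k}1/\qq{j})$. Multiplying this by \eqref{bc2} and collecting the $\qq{p}$-term invokes the elementary identity
$$\sum_{j=1}^{k-1}\frac{q^j}{\qq{j}}+\sum_{j=1}^{k}\frac{1}{\qq{j}}=\frac{1}{\qq{k}}+\sum_{j=1}^{k-1}\frac{1+q^j}{\qq{j}},$$
which is precisely the bracket in \eqref{bc3}. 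The sole delicate point is the precision budget: both (bc1) and (bc2) are invoked one order below the target $\qq{p}^3$, and this is only permissible because the leading factor of $\qq{p}$ in (bc2) accounts for the missing order. Beyond this, the proof is pure bookkeeping.
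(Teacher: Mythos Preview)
Your proof is correct and follows essentially the same route as the paper: for \eqref{bc1} and \eqref{bc2} you write the $q$-binomial as a product, pull out the exact prefactors $(-1)^k q^{-\binom{k+1}{2}}$ (resp.\ $\qq{p}/\qq{k}$), and linearise the remaining product, which is exactly what the paper does; for \eqref{bc3} the paper just says ``\eqref{bc1} and \eqref{bc2} easily yield \eqref{bc3}'', and your argument (inverting \eqref{bc1} at $a=1$ modulo $\qq{p}^2$, multiplying by \eqref{bc2}, and using the extra $\qq{p}$ to reach $\qq{p}^3$) is precisely the intended ``easy'' computation.
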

\begin{proof} Since $\qq{ap}\equiv a\qq{p} \pmod{\qq{p}^2}$, it follows that
\begin{align*}
\qbin{ap-1}{k}
&=(-1)^{k}q^{-{k+1\choose 2}}\prod_{j=1}^{k}\left(1-{\qq{ap}\over \qq{j}}\right)
\equiv(-1)^{k}q^{-{k+1\choose 2}}\left(1-a\qq{p}\sum_{j=1}^{k}{1\over \qq{j}}\right)\pmod{\qq{p}^2}.
\end{align*}
Moreover
\begin{align*}
\qbin{p-1+k}{k}&={\qq{p}\over {\qq{k}}}\prod_{j=1}^{k-1}\left(1+{q^j\qq{p}\over \qq{j}}\right)
\equiv{\qq{p}\over \qq{k}}\left(1+\qq{p}\sum_{j=1}^{k-1}{q^j\over \qq{j}}\right)\pmod{\qq{p}^3}.
\end{align*}
Conguences \eqref{bc1} and \eqref{bc2} easily yield \eqref{bc3}.
\end{proof}
It should be noted that when $p$ is an odd prime, by using \eqref{bc2} for $k=p-1$, we recover the $q$-congruence \cite[(3.2)]{An2}:
\begin{equation}\label{qcan}
\qbin{ap-1}{p-1}\equiv q^{-{p\choose 2}}\left(1-a\qq{p}\sum_{j=1}^{p-1}{1\over \qq{j}}\right)
\equiv q^{-{p\choose 2}}\left(1-{a\qq{p}(p-1)(1-q)\over 2}\right)\equiv q^{(a-1){p\choose 2}}
\pmod{\qq{p}^2}.
\end{equation}

%%%%%%%%%%%%%%%%%%%%%%%%%%%%%%%%%%%%%%%%%%%%%%%%%%%%%%%%%%%%%%%%%%%%%%%%%%
\section{Proof of \eqref{qc1}}
%%%%%%%%%%%%%%%%%%%%%%%%%%%%%%%%%%%%%%%%%%%%%%%%%%%%%%%%%%%%%%%%%%%%%%%%%%
By \cite[(5.17)]{An1} (see \cite[(4.1)]{GZ} for a generalization), 
if $n$ is odd then by 
$$\sum_{k=0}^n {(-1)^{n-k}q^{{n-k\choose 2}}\over (-q;q)_k}\qbin{n}{k}\qbin{2k}{k}=0.$$
Hence for $n=p$ we have that
$$\sum_{k=1}^{p-1} {(-1)^{k-1}q^{{p-k\choose 2}-{p\choose 2}}\over (-q;q)_k\qq{k}}\qbin{p-1}{k-1}\qbin{2k}{k}={1\over (-q;q)_{p-1}\qq{p}}\left((-q;q)_{p-1}-q^{-{p\choose 2}}\qbin{2p-1}{p-1}\right).$$
By \eqref{bc1} and \eqref{qcan} we get
$$\sum_{k=1}^{p-1} {q^{k}\over (-q;q)_k\qq{k}}\qbin{2k}{k}\equiv\sum_{k=1}^{p-1} {q^{-pk+k}\over (-q;q)_k\qq{k}}\qbin{2k}{k}
\equiv{(-q;q)_{p-1}-1\over (-q;q)_{p-1}\qq{p}}\equiv Q_p(2;q)\pmod{\qq{p}},$$
and the first congruence is proved.

As regards the second one, we take
$$F(n,k)={(-1)^k\over (-q;q)_n\qq{k+1}}\qbin{n+k+1}{k+1}^{-1}\eee G(n,k)={q^{n+1}F(n,k)\over 1+q^{n+1}}.$$
This pair can be found in \cite{AZ}[Subsection 2.1] in connection with the 
irrationality proof of the $q$-series
$$\mbox{Ln}_q(2):=\sum_{k=1}^{\infty} {(-1)^{k}\over q^n-1}$$
for $|q|\not\in\{0,1\}$.
Hence by \eqref{qMWZ} we obtain the identity
\begin{equation}\label{id2}
\sum_{k=1}^n {(-1)^k(1+q^k+q^{2k})\over  (-q;q)_{k}\qq{k}}\qbin{2k}{k}^{-1}
={1\over (-q;q)_{n}}\sum_{k=1}^n {(-1)^k\over  \qq{k}}\qbin{n+k}{k}^{-1}
-\sum_{k=1}^n {q^k\over (-q;q)_k\qq{k}}
\end{equation}
Let $n=p-1$. Now by \eqref{bc1} and \cite[(1.5)]{Pa}
\begin{align*}
{1\over (-q;q)_{p-1}}\sum_{k=1}^{p-1} {(-1)^k\over  \qq{k}}\qbin{n+k}{k}^{-1}&\equiv \sum_{k=1}^{p-1} {(-1)^k\over  \qq{p}}\left(1- \qq{p}\sum_{j=1}^{k-1} {q^j\over \qq{j}}\right)\\
&\equiv -\sum_{k=1}^{p-1} (-1)^k\sum_{j=1}^{k-1} {q^j\over \qq{j}}\equiv -\sum_{j=1}^{p-1}{q^{2j-1}\over \qq{2j-1}} \\
&\equiv {(p-1)(1-q)\over 2}-\sum_{k=1}^{p-1}{1\over \qq{j}}+\sum_{k=1}^{(p-1)/2}{1\over \qq{2j}}\equiv -Q_p(2;q)\pmod{\qq{p}}.
\end{align*}
By the $q$-binomial theorem,
$$\sum_{k=0}^{n} \qbin{n}{k}\prod_{j=0}^{k-1}(x-q^j)=x^n$$
and for $n=p$, $x=-1$ together with \eqref{bc1}, we get
$$\sum_{k=1}^{p-1} {q^{-{k\choose 2}}(-q;q)_{k-1}\over \qq{k}}
\equiv\sum_{k=1}^{p-1} {(-1)^{k-1}(-q;q)_{k-1}\over \qq{k}}\qbin{p-1}{k-1}=-Q_p(2;q).$$
Hence (see the {\sl dual} congruence \cite[(5.4)]{Pa})
$$\sum_{k=1}^{p-1} {q^k\over (-q;q)_k\qq{k}}\equiv \sum_{k=1}^{p-1} {q^{p-k}\over (-q;q)_{p-k}\qq{p-k}}
\equiv \sum_{k=1}^{p-1} {q^{-{k\choose 2}}(-q;q)_{k-1}\over \qq{k}}
\equiv -Q_p(2;q)\pmod{\qq{p}}$$
where we used
$$\qq{p-k}=-q^{-k}\qq{k} \eee (-q;q)_{p-k}^{-1}\equiv q^{-{k\choose 2}}(-q;q)_{k-1}\pmod{\qq{p}}.$$
Therefore, by identity \eqref{id2},
$$\sum_{k=1}^{p-1} {(-1)^k(1+q^k+q^{2k})\over  (-q;q)_{k}\qq{k}}\qbin{2k}{k}^{-1}\equiv -2Q_p(2;q) \pmod{\qq{p}}$$
and we are done.
%%%%%%%%%%%%%%%%%%%%%%%%%%%%%%%%%%%%%%%%%%%%%%%%%%%%%%%%%%%%%%%%%%%%%%%%%%
\section{Proof of \eqref{qc2}}
%%%%%%%%%%%%%%%%%%%%%%%%%%%%%%%%%%%%%%%%%%%%%%%%%%%%%%%%%%%%%%%%%%%%%%%%%%
Let
$$F(n,k)={q^{-{k+1\choose 2}}\over \qq{k+1}}\qbin{n+k+2}{n+1}\qbin{n+1}{k+1}^{-1}$$
and
$$G(n,k)=-{q^{n+2}(1-q^{k+1})(1-q^{n+1-k})F(n,k)\over (1+q^{n+2})(1-q^{n+2})^2}$$
then \eqref{qMWZ} gives the identity
\begin{equation}\label{id3}
\sum_{k=1}^n {(1+q^k+q^{2k})\,q^{-{k\choose 2}}\over (1+q^k)^2\qq{k}}\qbin{2k}{k}
=\sum_{k=1}^n {q^{-{k\choose 2}}\over \qq{k}}\qbin{n+k}{k}\qbin{n}{k}^{-1}
-\sum_{k=1}^n {q^k\over \qq{2k}}.
\end{equation}
Let $n=p-1$ then by \eqref{bc3} and \eqref{qHneg}, we obtain
$$\sum_{k=1}^{p-1} {q^{-{k\choose 2}}\over \qq{k}}\qbin{p-1+k}{k}\qbin{p-1}{k}^{-1}
\equiv  \qq{p}\sum_{k=1}^{p-1} {(-1)^{k}q^{k}\over \qq{k}^2}\equiv 0\pmod{\qq{p}^2}.$$
Moreover, \eqref{qHpos} and \eqref{qH2} implies that
$$\sum_{k=1}^{p-1} {q^k\over \qq{2k}}\equiv -\qq{p}\sum_{k=1}^{p-1} {q^k\over \qq{2k}^2}\equiv - {\qq{p}(p^2-1)(1-q)^2\over 24}\pmod{\qq{p}^2},$$
and \eqref{qc2} follows easily from \eqref{id3}.

\noindent Note that by letting $q\to 1$ in \eqref{id3} we obtain the identity
$${3\over 4}\sum_{k=1}^n {1\over k}{2k \choose k}
=\sum_{k=1}^n {1\over k}\binom{n+k}{k}\binom{n}{k}^{-1}-{1\over 2}\sum_{k=1}^n {1\over k}.$$
which can be exploited to prove an improvement of \cite[Theorem 4.2]{Ta2}:
$$\sum_{k=1}^{p-1} {1\over k}{2k \choose k}\equiv -{8\over 3}H_{p-1}(1)+2p^4 B_{p-5}\pmod{p^5}$$
for any prime $p>3$.
%%%%%%%%%%%%%%%%%%%%%%%%%%%%%%%%%%%%%%%%%%%%%%%%%%%%%%%%%%%%%%%%%%%%%%%%%%
\section{Proof of \eqref{qc3}}
%%%%%%%%%%%%%%%%%%%%%%%%%%%%%%%%%%%%%%%%%%%%%%%%%%%%%%%%%%%%%%%%%%%%%%%%%%
By  taking 
$$F(n,k)={(-1)^k q^{-{k+1\choose 2}}(1+q^{k+1})\over \qq{k+1}^2}\qbin{n+k+2}{n+1}\qbin{n+1}{k+1}^{-1}$$
and
$$G(n,k)={q^{n+2}(1-q^{k+1})^2(1-q^{n+1-k})F(n,k)\over (1+q^{k+1})(1-q^{n+2})^3},$$
\eqref{qMWZ} yields the identity
\begin{equation}\label{id4}
\sum_{k=1}^n {(-1)^k(1+3q^k+q^{2k})q^{-{k\choose 2}}\over (1+q^k)\qq{k}^2}\qbin{2k}{k}
=\sum_{k=1}^n {(-1)^k(1+q^k)q^{-{k\choose 2}}\over \qq{k}^2}\qbin{n+k}{k}\qbin{n}{k}^{-1}
-\sum_{k=1}^n {q^k\over \qq{k}^2}.
\end{equation}
Let $n=p-1$. Now by \eqref{bc3}
\begin{align*}
\sum_{k=1}^{p-1} {(-1)^k(1+q^k)q^{-{k\choose 2}}\over \qq{k}^2}\qbin{p-1+k}{k}\qbin{p-1}{k}^{-1}
&\equiv  \qq{p}\sum_{k=1}^{p-1} {q^k+q^{2k}\over \qq{k}^3}+\qq{p}^2\sum_{k=1}^{p-1} {q^k+q^{2k}\over \qq{k}^4}\\
&\qquad +\qq{p}^2\sum_{1\leq j<k\leq p-1} {(1+q^j)(q^k+q^{2k})\over \qq{j}\qq{k}^3}
\pmod{\qq{p}^3}.
\end{align*}
Then the $q$-congruence \eqref{qc3} follows from \eqref{id4} by using \eqref{qHpos3} and
$$\sum_{k=1}^{p-1} {q^k+q^{2k}\over \qq{k}^4}\equiv -\sum_{1\leq j<k\leq p-1} {(1+q^j)(q^k+q^{2k})\over \qq{j}\qq{k}^3}\equiv
{(1-q)^4(p^2-1)(p^2-4)\over 360}\pmod{\qq{p}},$$
which is a straightforward application of \eqref{qH} and \eqref{qHH}.

%%%%%%%%%%%%%%%%%%%%%%%%%%%%%%%%%%%%%%%%%%%%%%%%%%%%%%%%%%%%%%%%%%%%%%%

\vspace*{0.2cm}

\end{document}